\newtheorem{theorem}{Theorem}[section]
\newtheorem{lemma}[theorem]{Lemma}
\newtheorem{proposition}[theorem]{Proposition}
\newtheorem{corollary}[theorem]{Corollary}
\theoremstyle{definition}
\newtheorem{definition}[theorem]{Definition}
\newtheorem{example}[theorem]{Example}
\newtheorem{remark}[theorem]{Remark}
\newcommand{\SN}{\mathbb{N}}                    
\newcommand{\SZ}{\mathbb{Z}}                    
\newcommand{\SC}{\mathbb{C}}                    
\newcommand{\ra}[1]{\kern-1.5ex\xrightarrow{\ \ #1\ \ }\phantom{}\kern-1.5ex}
\newcommand{\ras}[1]{\kern-1.5ex\xrightarrow{\ \ \smash{#1}\ \ }\phantom{}\kern-1.5ex}
\title{Enumeration of diagonally colored Young diagrams}
\author{Ádám Gyenge}
\address{Alfréd Rényi Institute of Mathematics, Hungarian Academy of Sciences, Reáltanoda utca 13-15, H-1053, Budapest, Hungary}
\email{gyenge.adam@renyi.mta.hu}
\keywords{Young diagram, quiver variety, generating series, generalized Frobenius partition}
\begin{document}

\begin{abstract} In this note we give a new proof of a closed formula for the multivariable generating series of diagonally colored Young diagrams. This series also describes the Euler characteristics of certain Nakajima quiver varieties. Our proof is a direct combinatorial argument, based on Andrews' work on generalized Frobenius partitions. We also obtain representations of these series in some particular cases as infinite products.
\end{abstract}

\maketitle


\section{Introduction}
\label{sec:intro}

Young diagrams are important combinatorial objects appearing in many areas of mathematics including representation theory, algebraic geometry and mathematical physics. As it is well known, Young diagrams correspond to partitions of natural numbers. Let us denote by $P(k)$ the number of partitions of the natural number $k$. Then the generating series of the numbers $P(k)$ is given by the classic result of Euler:
\begin{equation} \label{eq:partsum}
\sum_{k=0}^\infty P(k)q^k=\prod_{m=1}^{\infty}\frac{1}{1-q^m}\;.
\end{equation}

Many problems in representation theory and algebraic geometry require the enumeration of \emph{colored} Young diagrams. These are Young diagrams such that the boxes are labelled by numbers according to some specific rule.

In this note, we restrict our attention to diagonally colored Young diagrams. Theorem \ref{thm:coldiaggen} below gives their multivariable generating function in a closed form, giving a direct generalization of (\ref{eq:partsum}).  This generating series can be computed in different ways, for example using ideas related to representation theory \cite{fujii2005combinatorial}. Here we present a simple and elementary calculation based purely on combinatorics, following the ideas of Andrews \cite{andrews1984generalized}. Moreover, we obtain representations of these generating series in some particular cases as infinite products.


\subsection{Diagonally colored Young diagrams}

A partition of a natural number $k \in \SN$ is a sequence of non-increasing natural numbers $\lambda_1 \geq \dots \geq \lambda_m > 0$ such that $\sum_i \lambda_i=k$. The Young diagram corresponding to a partition $\lambda=(\lambda_1, \dots,\lambda_m )$ is a collection of columns of square blocks with $\lambda_i$ boxes in the $i$-th column for $i=1,\dots,m$, such that the blocks in a column form a consecutive series without a hole. We identify partitions with the corresponding Young diagrams.  For a general reference on Young diagrams and on many of their numerous applications we refer the reader to \cite{fulton1997young, macdonald1998symmetric}.

The \emph{weight} $|Y|$ of a Young diagram $Y$ is the number of blocks in $Y$. In particular, if $Y$ corresponds to a partition $\lambda$, then $|Y|=\sum_i \lambda_i$, the number of which $\lambda$ is a partition of. The set of all Young diagrams will be denoted as $\mathcal{P}$. It decomposes into a disjoint union 
\[ \mathcal{P}=\bigsqcup_{k\geq 0} \mathcal{P}(k)\;,\]
where $\mathcal{P}(k)=\{ Y \in \mathcal{P} \colon |Y|=k \}$.

The block $s \in Y$ at the $i$-th row and $j$-th column is called the $(i,j)$-component of $Y$. Let $C$ be a set. A \emph{coloring} (or \emph{labeling}) of a Young diagram $Y$ with $C$ is a function assigning an element of $C$ to each box of $Y$.

In the rest of this section we fix an integer $n\geq 2$, the \emph{modulus}, and let $C=\SZ/n\SZ$.
\begin{definition} Let $Y \in \mathcal{P}$, and suppose we are given an $a \in C$. The \emph{diagonal $a$-coloring} (or, shortly, \emph{$a$-coloring}) of $Y$ is defined by associating to the $(i,j)$-component $s$ of $Y$ the residue
\[ \mathrm{res}(s)=a-i+j+n\SZ \in C\;.\]
\end{definition}
For any $a, c \in C$ and any diagram $Y \in \mathcal{P}$ we will denote by $\mathrm{wt}_c(a,Y)$, the \emph{$c$-weight of $Y$}, which is the number of boxes in $Y$, whose color according to the $a$-coloring is $c$. Clearly, $\sum_{c \in C} \mathrm{wt}_c(a, Y)=|Y|$ for any $a \in C$.

These notions can be extended to tuples of Young diagrams. Let $\underline{Y}=(Y_1,\dots,Y_l)$ be an $l$-tuple of Young diagrams. The \emph{total weight} of $\underline{Y}$ is defined as $|\underline{Y}|=\sum_{i=1}^l|Y_i|$. The set $\mathcal{P}_l$ of all $l$-tuples of Young diagrams decomposes as
\[ \mathcal{P}_l=\bigsqcup_{k\geq 0} \mathcal{P}_l(k)\;,\]
where $\mathcal{P}_l(k)=\{ \underline{Y} \in \mathcal{P}_l \colon |\underline{Y}|=k \}$. 

\begin{definition} Let $\underline{Y} \in \mathcal{P}_l$, and suppose we are given $a_m \in C$ for each $1\leq m \leq l$, which are collected in a vector $\underline{a}=(a_1,\dots,a_l)$.  The \emph{diagonal $\underline{a}$-coloring} (or, shortly, \emph{$\underline{a}$-coloring}) of $\underline{Y}$ is defined by associating to the $(i,j)$-component $s$ of $Y_m$ the residue
\[ \mathrm{res}(s)=a_m-i+j+n\SZ \in C\;.\]
\end{definition}
\begin{example} 
\label{ex:1}
For $n=3$ the diagonal $(2,1)$-coloring on the Young diagrams corresponding to the pair of partitions $((4,3,2),(2,1,1,1))$ is the following:
\[
\begin{pmatrix}\;\;
\begin{tikzpicture}[scale=0.6, font=\footnotesize, fill=black!20]
\draw (0, 0) -- (3,0);
\draw (0,1) --(3,1);
\draw (0,2) --(3,2);
\draw (0,3) --(2,3);
\draw (0,4) --(1,4);
\draw (0,0) -- (0,4);
\draw (1,0) -- (1,4);
\draw (2,0) -- (2,3);
\draw (3,0) -- (3,2);
\draw (0.5,0.5) node {2};
\draw (1.5,0.5) node {0};
\draw (2.5,0.5) node {1};
\draw (0.5,1.5) node {1};
\draw (1.5,1.5) node {2};
\draw (2.5,1.5) node {0};
\draw (0.5,2.5) node {0};
\draw (1.5,2.5) node {1};
\draw (0.5,3.5) node {2};
\end{tikzpicture},\;
\begin{tikzpicture}[scale=0.6, font=\footnotesize, fill=black!20]
\draw (0, 0) -- (4,0);
\draw (0,1) --(4,1);
\draw (0,2) --(1,2);
\draw (0,2) --(0,0);
\draw (1,2) --(1,0);
\draw (2,1) -- (2,0);
\draw (3,1) -- (3,0);
\draw (4,1) -- (4,0);
\draw (0.5,0.5) node {1};
\draw (1.5,0.5) node {2};
\draw (2.5,0.5) node {0};
\draw (3.5,0.5) node {1};
\draw (0.5,1.5) node {0};
\end{tikzpicture}\;\;
\end{pmatrix}\;.
\]
\end{example}

Naturally, for any $\underline{a} \in C^l$, $c \in C$ and any $l$-tuple of Young diagrams $\underline{Y} \in \mathcal{P}_l$ we will denote by $\mathrm{wt}_c(\underline{a},\underline{Y})$, the \emph{$c$-weight of $\underline{Y}$}, which is the number of boxes in $\underline{Y}$, whose color according to the $\underline{a}$-coloring is $c$. Clearly, $\sum_{c \in C} \mathrm{wt}_c(\underline{a}, \underline{Y})=|\underline{Y}|$ for any $\underline{a} \in C^l$. We arrange the $c$-weights into a vector $\underline{\mathrm{wt}}(\underline{a},\underline{Y})=(\mathrm{wt}_0(\underline{a},\underline{Y}),\dots, \mathrm{wt}_{n-1}(\underline{a},\underline{Y})) \in (\SZ_{\geq 0})^{n}$.

Using these notations for any fixed $\underline{a} \in C^l$ the set $\mathcal{P}_l(k)$ decomposes as
\[ \mathcal{P}_l(k)=\bigsqcup_{|\underline{v}|=k}\mathcal{P}_{\underline{a}}(\underline{v})\;, \]
where $\underline{v} \in (\SZ_{\geq 0})^{n}$, $|\underline{v}|=\sum_c v_c$ and 
\[ \mathcal{P}_{\underline{a}}(\underline{v})=\{ \underline{Y} \in \mathcal{P}_l(k)\;|\; \underline{\mathrm{wt}}(\underline{a},\underline{Y})=\underline{v} \}\;.\]

\subsection{Generating series}

Let us introduce formal variables $q_c$ associated to each color $c \in C$. These will be collected into a vector $\underline{q}=(q_0,\dots, q_{n-1})$. The indices are always meant as elements in $C$. We introduce the notation $\underline{q}^{\underline{k}}=\prod_{c \in C} q_c^{k_c}$ for any vector $\underline{k}$ whose components are indexed by $C$. In particular, $\underline{q}^{\underline{\mathrm{wt}}(a,Y)}=\prod_{c \in C} q_c^{\mathrm{wt}_c(a,Y)}$ and $\underline{q}^{\underline{\mathrm{wt}}(\underline{a},\underline{Y})}=\prod_{c \in C} q_c^{\mathrm{wt}_c(\underline{a},\underline{Y})}$. Throughout the article we always assume that, when evaluated, $|q_c| \ll 1$ for all $c \in C$. 

The colored (multivariable) generating series of colored Young diagrams is defined as
\[ Z_a(\underline{q})=\sum_{Y \in  \mathcal{P}} \underline{q}^{\underline{\mathrm{wt}}(a,Y)}\;. \]
Similarly, the colored (multivariable) generating series of $l$-tuples of colored Young diagrams is defined as
\[
Z_{\underline{a}}(\underline{q})=\sum_{\underline{Y} \in  \mathcal{P}_l} \underline{q}^{\underline{\mathrm{wt}}(\underline{a},\underline{Y})}\;. 
\]

Since $\mathcal{P}_l=\mathcal{P}^l$, and the coloring function is independent on the individual components, one immediately obtains
\begin{corollary}
\[ Z_{\underline{a}}(\underline{q})=\prod_{m=1}^l Z_{a_m}(\underline{q})\;. \]
\end{corollary}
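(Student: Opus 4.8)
The plan is to unwind the definition of $Z_{\underline{a}}(\underline{q})$ as a sum over $\mathcal{P}_l$ and exploit the two structural facts flagged just before the statement: that $\mathcal{P}_l=\mathcal{P}^l$ identifies an $l$-tuple $\underline{Y}$ with a point $(Y_1,\dots,Y_l)$ of the $l$-fold Cartesian product, and that the diagonal $\underline{a}$-coloring acts on each component $Y_m$ using only the corresponding parameter $a_m$. The heart of the argument is the standard generating-function principle that a series summed over a Cartesian product, whose exponent is \emph{additive} across the factors, factors as the product of the individual series.

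First I would record the additivity of the weight vector. Since the residue assigned to the $(i,j)$-component of $Y_m$ depends only on $a_m$ and on the position $(i,j)$ inside $Y_m$, and not on any of the other diagrams in the tuple, counting the boxes of a fixed color $c$ across all of $\underline{Y}$ simply sums the corresponding counts in the individual diagrams, giving
\[
\underline{\mathrm{wt}}(\underline{a},\underline{Y})=\sum_{m=1}^l \underline{\mathrm{wt}}(a_m,Y_m)\qquad\text{in }(\SZ_{\geq 0})^{n}.
\]
Raising $\underline{q}$ to this vector converts the sum in the exponent into a product of monomials, $\underline{q}^{\underline{\mathrm{wt}}(\underline{a},\underline{Y})}=\prod_{m=1}^l \underline{q}^{\underline{\mathrm{wt}}(a_m,Y_m)}$. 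Substituting into the definition and rearranging the sum over $\mathcal{P}^l$ into a product of sums over the individual copies of $\mathcal{P}$ then yields
\[
Z_{\underline{a}}(\underline{q})=\sum_{(Y_1,\dots,Y_l)\in\mathcal{P}^l}\ \prod_{m=1}^l \underline{q}^{\underline{\mathrm{wt}}(a_m,Y_m)}=\prod_{m=1}^l\Bigl(\sum_{Y_m\in\mathcal{P}}\underline{q}^{\underline{\mathrm{wt}}(a_m,Y_m)}\Bigr)=\prod_{m=1}^l Z_{a_m}(\underline{q}),
\]
which is exactly the claimed identity.

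The only step that is not purely formal is the middle equality above, namely the interchange of the infinite sum over $\mathcal{P}^l$ with the product over $m$. This is precisely where the standing hypothesis $|q_c|\ll 1$ for all $c\in C$ enters: it ensures that each factor $Z_{a_m}(\underline{q})$ converges absolutely, so that the multivariate series over $\mathcal{P}^l$ is absolutely convergent and the distributive rearrangement into a product of sums is legitimate. I expect this convergence bookkeeping to be the main (and essentially the only) obstacle; once it is in place, the factorization is a one-line application of distributivity, which is why the statement can be regarded as an immediate corollary.
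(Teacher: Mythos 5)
Your argument is correct and is exactly the reasoning the paper has in mind: the paper states the corollary as an immediate consequence of $\mathcal{P}_l=\mathcal{P}^l$ and the componentwise independence of the coloring, which is precisely the additivity-of-weights and distributivity argument you spell out. The extra remark on absolute convergence is harmless but not needed if one works with formal power series.
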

In particular, the calculation of $Z_{\underline{a}}(\underline{q})$ is easily reduced to that of $Z_a(\underline{q})$.

\subsection{Euler characteristics of quiver varieties}

We now make a digression, which emphasizes the importance of the series $Z_{\underline{a}}(\underline{q})$.  

Let $(I,H)$ be a quiver. More precisely, $I$ is a set of vertices and $H$ is a set of oriented edges. Let $\overline{H}=H \cup H^{\ast}$, where $H^{\ast}$ is the set of edges in $H$ with the reversed orientation. 
For dimension vectors $\underline{v}, \underline{w} \in \SZ^I_{\geq 0}$ 
we define a Nakajima quiver variety as follows. See \cite{nakajima2002geometric} and references therein for the details, here we follow the notations of \cite{sam2014combinatorial}. 

Fix $I$-graded vector spaces $V , W$ such that $\dim V_i = v_i, \dim W_i = w_i$. Let
\[ M(\underline{v},\underline{w}) =\left( \bigoplus_{h \in \overline{H}} \mathrm{Hom}(V_{\mathrm{s}(h)},V_{\mathrm{t}(h)})\right)\oplus \left( \bigoplus_{i \in I} \mathrm{Hom}(W_i,V_i)\oplus \mathrm{Hom}(V_i,W_i) \right), \]
where $h \in \overline{H}$ is an oriented edge from $\mathrm{s}(h)$ to $\mathrm{t}(h)$. Note that  $GL(V)=\prod GL(V_i)$ acts on $M(\underline{v},\underline{w})$ by
\[ (g_i) \cdot (B_h,a_i,b_i)=(g_{\mathrm{t}(h)}B_h g_{\mathrm{s}(h)}^{-1},g_ia_i, b_ig_i^{-1}) \]
for any $g_i \in GL(V_i)$, $B_h \in \mathrm{Hom}(V_{\mathrm{s}(h)},V_{\mathrm{t}(h)})$, $a_i \in \mathrm{Hom}(W_i,V_i)$ and $b_i \in \mathrm{Hom}(V_i,W_i)$. Elements in $M(\underline{v},\underline{w})$ can be shortly denoted as a triple $(B,a,b)$. Here $B$ is a representation of the path algebra of the quiver on $V$, while $a\colon W \to V$ and $b\colon V \to W$ are maps of $I$-graded vector spaces.

The moment map $\mu$ for the $GL(V)$-action on $M(\underline{v},\underline{w})$ is given by
\[ \mu (B,a,b)= \bigoplus_{i \in I}\left( \sum_{h:\mathrm{t}(h)=i} \epsilon(h)B_hB_{h^{\ast}}+a_ib_i \right) \in \bigoplus_{i \in I}\mathfrak{gl}(V_i)=\mathfrak{gl}(V) \;,\]
where $\epsilon(h)=1$ and $\epsilon(h^{\ast})=-1$ for $h \in H$. 
A triple $(B,a,b) \in M(\underline{v},\underline{w})$ is called \emph{stable} if $\mathrm{im}(a)$ generates $V$ under the action of $B$. The subset of stable triples in $M(\underline{v},\underline{w})$ is denoted as $M(\underline{v},\underline{w})^{\mathrm{st}}$.

The quiver variety associated to the dimension vectors $\underline{v}, \underline{w}$ is
\[\mathcal{M}(\underline{v},\underline{w})=\{ (B,a,b) \in M(\underline{v},\underline{w})^{\mathrm{st}} \;|\; \mu(B,a,b)=0 \}/ GL(V)\;. \]
This is well defined only up to a (non-canonical) isomorphism, but since we are only interested in its topological properties we do not consider its dependence on the vector spaces $V$ and $W$ here.

Let $(I, H)$ be the affine Dynkin quiver $A^{(1)}_{n-1}$ with the cyclic orientation of the edges, hence $I$ can be identified with $C=\SZ/n\SZ$:
\begin{center}
\begin{tikzpicture}[scale=1.2,line width=1pt, font=\footnotesize]
\node (1)  at ( 0,0) [circle,draw, fill=black, inner sep=0pt,minimum size=7pt, label=below:$1$] {};
\node (2) at ( 1,0) [circle,draw, fill=black, inner sep=0pt,minimum size=7pt,label=below:$2$] {};
\node (n-2) at ( 3,0) [circle,draw, fill=black, inner sep=0pt,minimum size=7pt,label=below:$n-2$] {};
\node (n-1) at ( 4,0) [circle,draw,fill=black,inner sep=0pt,minimum size=7pt, label=below:$n-1$.] {};
\node (0) at ( 2,1.5) [circle,draw,fill=black,inner sep=0pt,minimum size=7pt, label=above:$0$] {};
\draw [->] (1.east) -- (2.west);
\draw [dashed] (2.east) -- (n-2.west);
\draw [->] (n-2.east) -- (n-1.west);
\draw [->] (n-1.north west) -- (0.south east);
\draw [->] (0.south west) -- (1.north east);
\end{tikzpicture}
\end{center}
It can be shown that there is a $T=(\SC^\ast)^{|\underline{w}|+2}$-action on the associated quiver variety $\mathcal{M}(\underline{v},\underline{w})$, whose fixed points are isolated. Let $\underline{a}=(0,\dots,0,\dots,n-1, \dots,n-1)$, where for each  $c \in C$ the number of $c$'s in $\underline{a}$ is $w_c$. Elements of $\underline{a}$ correspond in turn to basis vectors of $W$. The exact order of the entries is not important for us since a permutation of them corresponds to an automorphism of $W$ on which, as mentioned above, the topology of $\mathcal{M}(\underline{v},\underline{w})$ does not depend.

\begin{proposition}{\cite[Proposition 5.7]{sam2014combinatorial}} The $T$-fixed points of $\mathcal{M}(\underline{v},\underline{w})$ are indexed by $|\underline{w}|$-tuples of diagonally colored Young diagrams $\underline{Y}$ such that $|\underline{Y}|=|\underline{v}|$, the $i$-th diagram $Y_i$ in $\underline{Y}$ is given the $a_i$-coloring, and $\underline{\mathrm{wt}}(\underline{a},\underline{Y})=\underline{v}$. 
\end{proposition}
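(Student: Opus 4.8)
The plan is to linearize the fixed-point condition and then read off the combinatorial data from the weight decomposition of $V$, following the standard fixed-point analysis for Nakajima quiver varieties. First I would observe that a point $[(B,a,b)] \in \mathcal{M}(\underline{v},\underline{w})$ is $T$-fixed precisely when, for every $t \in T$, there exists some $g(t) \in GL(V)$ with $t\cdot(B,a,b) = g(t)\cdot(B,a,b)$. Stability forces the $GL(V)$-stabilizer of a stable triple to be trivial: if $g$ fixes $(B,a,b)$ then it fixes $\mathrm{im}(a)$ and commutes with every $B_h$, hence fixes the subspace generated by $\mathrm{im}(a)$ under $B$, which is all of $V$. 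Therefore $g(t)$ is uniquely determined, and $t \mapsto g(t)$ is a homomorphism $\rho\colon T \to GL(V)$.

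Next I would use $\rho$ together with the tautological $T$-action on $W$ to decompose $V = \bigoplus_\chi V_\chi$ and $W = \bigoplus_\chi W_\chi$ into $T$-weight spaces. Writing out the equivariance relations $\rho(t)B_h\rho(t)^{-1} = t\cdot B_h$, $\rho(t)a = t\cdot a$ and $b\,\rho(t)^{-1} = t\cdot b$ shows that $a$ is weight-preserving, while the two families of arrows (the $B_h$ for $h \in H$ and the $B_{h^\ast}$ for $h^\ast \in H^\ast$) shift the $T$-weight by two fixed characters $\xi_1, \xi_2$ coming from the two $\SC^\ast$-factors that scale the arrows. The remaining $|\underline{w}|$ factors of $T$ act on the framing, so the basis vectors $w_1,\dots,w_{|\underline{w}|}$ of $W$ — indexed exactly by the entries of $\underline{a}$ — sit in pairwise distinct $T$-weights, with $w_k \in W_{a_k}$ in the $I$-grading.

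The heart of the argument, and the step I expect to be the main obstacle, is to show that $V$ is a multiplicity-free $T$-module whose weights, grouped by their source framing vector, assemble into Young diagrams. Isolatedness of the fixed points is exactly what forces each $T$-weight space of $V$ to be at most one-dimensional; granting this, I would argue as follows. By stability $V$ is spanned by the vectors $B_{h_r}\cdots B_{h_1}a(w_k)$, so every weight occurring in $V$ lies in some translate $\chi_k + \SZ_{\geq 0}\,\xi_1 + \SZ_{\geq 0}\,\xi_2$. The moment-map equation $\mu(B,a,b)=0$ encodes the ADHM-type commutation relation between the ``$\xi_1$-arrows'' and the ``$\xi_2$-arrows'', and combined with multiplicity one it forces the set $S_k$ of weights reachable from $w_k$ to be a down-closed (staircase) subset of $\SZ_{\geq 0}^2$: whenever a lattice point lies in $S_k$, so do its two immediate predecessors. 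Such an $S_k$ is exactly a Young diagram $Y_k$. One also checks that $b=0$ at such a point, since a nonzero $b$ would be incompatible with stability together with the relations.

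It then remains to match the decorations. A box of $Y_k$ at row $i$ and column $j$ corresponds to the weight $\chi_k + (j-1)\,\xi_1 + (i-1)\,\xi_2$, whose $I$-grade in $C = \SZ/n\SZ$ is $a_k + (j-1) - (i-1) = a_k - i + j$, which is precisely the residue $\mathrm{res}(s)$ defining the $a_k$-coloring of $Y_k$. Hence the number of boxes of $\underline{Y}$ of color $c$ equals $\dim V_c = v_c$, giving $\underline{\mathrm{wt}}(\underline{a},\underline{Y}) = \underline{v}$ and $|\underline{Y}| = \sum_{c} v_c = |\underline{v}|$. For the converse I would run the construction backwards: given such a colored tuple, take $V$ to have a basis indexed by the boxes, let $a$ send $w_k$ to the corner box of $Y_k$, let the two arrow-families act by ``add a box'' in the two directions and set $b = 0$; this triple is stable, satisfies $\mu = 0$, and is $T$-fixed, and distinct tuples yield distinct fixed points. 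This produces the asserted bijection.
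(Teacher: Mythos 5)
First, a point of comparison: the paper does not prove this statement at all --- it is quoted verbatim from Sam and Tingley \cite[Proposition 5.7]{sam2014combinatorial} --- so there is no internal proof to measure yours against. Your outline follows the standard torus fixed-point analysis that underlies the cited result (trivial stabilizers of stable triples, the homomorphism $\rho\colon T\to GL(V)$, weight decomposition of $V$ and $W$, reduction to staircase/monomial combinatorics, matching the $I$-grading with the diagonal coloring), and most of the individual steps are sound.

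However, there is a genuine gap at exactly the point you flag as the heart of the argument. You deduce that each $T$-weight space of $V$ is at most one-dimensional from the isolatedness of the fixed points. This does not work, for two reasons. First, isolatedness is itself only asserted, not proved, in the paper and in your write-up, so the key step would rest on an input of the same depth as the conclusion. Second, and more seriously, the implication is not valid as stated: if $\dim V_\chi\geq 2$ for some weight $\chi$, the centralizer $\prod_\chi GL(V_\chi)$ of $\rho(T)$ does act on the set of triples fixed via $\rho$, but it moves $(B,a,b)$ within its own $GL(V)$-orbit, hence produces no new points of $\mathcal{M}(\underline{v},\underline{w})$ and no positive-dimensional family of fixed points; so no contradiction with isolatedness arises. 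The correct route runs in the opposite direction: multiplicity-freeness follows from stability together with the moment-map relations. Once $b=0$, the relations allow any path applied to $a(w_k)$ to be rewritten in a normal form $B_{(+)}^{p}B_{(-)}^{q}a(w_k)$, and since these spanning vectors have pairwise distinct $T$-weights $\chi_k+p\xi_1+q\xi_2$ (the $\chi_k$ being distinct framing characters), each weight space of $V$ is spanned by a single such vector; isolatedness is then a \emph{consequence} of the classification, not an ingredient. Relatedly, your claim that a nonzero $b$ ``would be incompatible with stability together with the relations'' also needs an actual argument: the weight analysis only rules out $b$ on all weight spaces except $V_{\chi_k+\xi_1+\xi_2}$, and killing that last component requires using the moment-map equation at the vertex $a_k$. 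As it stands this is an assertion, not a proof.
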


\begin{corollary} For any $w$ fixed, and $\underline{a}$ as above,
\[\sum_{\underline{v}}\chi(\mathcal{M}(\underline{v},\underline{w}))\underline{q}^{\underline{v}}=Z_{\underline{a}}(\underline{q})\;.\]
\end{corollary}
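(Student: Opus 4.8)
The plan is to reduce the Euler characteristic to a count of torus-fixed points and then match that count against the coefficients of $Z_{\underline{a}}(\underline{q})$ monomial by monomial. The first and only substantial step is to invoke the standard localization principle for Euler characteristics: for an algebraic action of a torus $T$ on a complex quasi-projective variety $X$, the topological Euler characteristic localizes to the fixed locus, $\chi(X)=\chi(X^T)$. The reason is that one chooses a generic one-parameter subgroup $\SC^\ast \hookrightarrow T$ having the same fixed locus as $T$; every orbit in $X \setminus X^{\SC^\ast}$ is isomorphic to $\SC^\ast$, which has vanishing (compactly supported) Euler characteristic, and fibering the complement over its orbit space gives $\chi(X \setminus X^{\SC^\ast})=0$. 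Applying this to $X=\mathcal{M}(\underline{v},\underline{w})$ with the $T=(\SC^\ast)^{|\underline{w}|+2}$-action, and using that the fixed points are isolated, I obtain
\[
\chi(\mathcal{M}(\underline{v},\underline{w}))=\chi\big(\mathcal{M}(\underline{v},\underline{w})^T\big)=\#\,\mathcal{M}(\underline{v},\underline{w})^T .
\]

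Next I would identify this fixed-point count combinatorially. Setting $l=|\underline{w}|$ and taking $\underline{a}$ as specified above, the preceding Proposition puts the $T$-fixed points in bijection with the $l$-tuples $\underline{Y}\in\mathcal{P}_l$ carrying the $\underline{a}$-coloring and satisfying $\underline{\mathrm{wt}}(\underline{a},\underline{Y})=\underline{v}$. In the notation introduced earlier this set is exactly $\mathcal{P}_{\underline{a}}(\underline{v})$, so
\[
\chi(\mathcal{M}(\underline{v},\underline{w}))=\#\,\mathcal{P}_{\underline{a}}(\underline{v}).
\]

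Finally I would assemble the generating series. Multiplying the previous identity by $\underline{q}^{\underline{v}}$ and summing over all $\underline{v}\in(\SZ_{\geq 0})^n$ gives $\sum_{\underline{v}}\chi(\mathcal{M}(\underline{v},\underline{w}))\,\underline{q}^{\underline{v}}=\sum_{\underline{v}}\#\,\mathcal{P}_{\underline{a}}(\underline{v})\,\underline{q}^{\underline{v}}$. On the other hand, I would regroup the defining sum $Z_{\underline{a}}(\underline{q})=\sum_{\underline{Y}\in\mathcal{P}_l}\underline{q}^{\underline{\mathrm{wt}}(\underline{a},\underline{Y})}$ according to the value $\underline{v}=\underline{\mathrm{wt}}(\underline{a},\underline{Y})$; since every tuple in $\mathcal{P}_{\underline{a}}(\underline{v})$ contributes the same monomial $\underline{q}^{\underline{v}}$, this collapses to $\sum_{\underline{v}}\#\,\mathcal{P}_{\underline{a}}(\underline{v})\,\underline{q}^{\underline{v}}$, which is the same expression. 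Comparing the two yields the claimed equality.

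The step I expect to be the main obstacle is the localization identity $\chi(X)=\chi(X^T)$: although it is classical, it is the only genuinely nontrivial input, and I would want to state its hypotheses carefully and cite a reference, noting that it requires the action to be algebraic and, for the passage to an honest count of fixed points, the fixed locus to be finite — both of which are guaranteed here. Everything else is a bookkeeping rearrangement of the two sums, and follows directly from the Proposition and the definition of $Z_{\underline{a}}(\underline{q})$.
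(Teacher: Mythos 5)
Your argument is correct and is exactly the reasoning the paper leaves implicit: the corollary is stated without proof, as an immediate consequence of the preceding Proposition via the standard localization $\chi(X)=\chi(X^T)$ for torus actions with isolated fixed points, followed by the regrouping of $Z_{\underline{a}}(\underline{q})$ by weight vector. Your write-up simply makes these standard steps explicit.
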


\subsection{The results}


For the sequel we will assume that $l=1$, hence we work only with partitions. In Section 2 we generalize some results of Andrews on F-partitions developed in \cite{andrews1984generalized}. Using these in Section 3 we present an elementary proof of the following fact, which can also be read off from the results of \cite{fujii2005combinatorial} or , proved there by a completely different method based on abacus combinatorics:
\begin{theorem}
\label{thm:coldiaggen}
\[ Z_a(\underline{q})= \left( \prod_{m=1}^\infty (1-q^m)^{-1} \right)^{n} \cdot\sum_{ \underline{m}=(m_1,\dots,m_{n-1}) \in \SZ^{n-1} } q_{1+a}^{m_1}\cdot\dots\cdot q_{n-1+a}^{m_{n-1}}(q^{1/2})^{\underline{m}^\top \cdot C \cdot \underline{m}}\;,\]
where $q=\prod_{i=0}^{n-1} q_i$, and $C$ is the Cartan matrix of finite type $A_{n-1}$.
\end{theorem}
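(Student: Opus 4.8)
The plan is to reduce the enumeration of diagonally colored Young diagrams to the combinatorics of generalized Frobenius partitions, following Andrews, and then to extract the product-times-theta-function shape from the structure of the Frobenius symbol. First I would recall the Frobenius representation of a partition $Y$: along the main diagonal, record the arm lengths $\alpha_1 > \cdots > \alpha_d \geq 0$ (boxes strictly to the right of the diagonal in each diagonal row) and the leg lengths $\beta_1 > \cdots > \beta_d \geq 0$ (boxes strictly below), where $d$ is the number of diagonal boxes. This gives a bijection between partitions and two-rowed Frobenius symbols $\binom{\alpha_1 \cdots \alpha_d}{\beta_1 \cdots \beta_d}$ with strictly decreasing nonnegative entries, and $|Y| = d + \sum \alpha_i + \sum \beta_i$. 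The crucial point for us is how the diagonal $a$-coloring distributes over the Frobenius data: a box on the main diagonal itself has residue $a$ (since $i=j$ gives $a-i+j = a$), the $k$-th box in an arm has a residue that increases by one as $j$ increases, and similarly the leg boxes step down by one. So the $c$-weight $\mathrm{wt}_c(a,Y)$ can be read off purely from the multiset of arm and leg lengths together with the count $d$ of diagonal boxes.

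Next I would set up the multivariable bookkeeping. The idea is to track, for each color class, how many boxes fall into it, and to organize the generating series by first summing over the diagonal length $d$, and then independently over the arm data and the leg data. Because the color of the $r$-th box in an arm of length $\alpha_i$ (counting from the diagonal) is determined by its distance from the diagonal modulo $n$, each arm contributes a monomial in the $q_c$ that depends only on $\alpha_i \bmod n$ and on $a$; the same holds for legs with the colors running in the opposite direction. I would then invoke the generalization of Andrews' results promised in Section~2 of the paper (which I am allowed to assume) to convert the sum over strictly decreasing arm sequences and strictly decreasing leg sequences into a manageable closed form. The strict-decrease condition is exactly what produces a Gaussian-type quadratic exponent: ordering constraints on the parts, after a change of variables recording how many parts lie in each residue class, turn into a sum with the triangular-number exponents that assemble into the quadratic form $\underline{m}^\top C \underline{m}$.

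The heart of the argument is identifying the quadratic form and the product factor explicitly. Here I expect the main obstacle to lie in the change of variables that passes from Frobenius data to the lattice $\SZ^{n-1}$ indexing the theta-like sum. Concretely, I would introduce variables $m_1,\dots,m_{n-1}$ recording a signed count of how the arm and leg residues deviate from perfect balance across the $n$ color classes (the total weight $q = \prod_i q_i$ is symmetric, so only the $n-1$ relative offsets survive as independent theta variables, matching the rank of $A_{n-1}$). The claim is that after summing the strictly-decreasing sequences within each residue class the generating series factors as a product of $n$ copies of Euler's series $\prod_{m\geq 1}(1-q^m)^{-1}$ in the balanced variable $q$, times a correction sum over $\underline{m} \in \SZ^{n-1}$ weighted by $\prod_{j=1}^{n-1} q_{j+a}^{m_j}$ and by $(q^{1/2})^{\underline{m}^\top C \underline{m}}$. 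I would verify the quadratic exponent by computing the weight contributed by a configuration with prescribed residue offsets: the number of boxes in a staircase forced by the strict inequalities grows quadratically, and a direct count of the diagonal plus the residue-balanced arm/leg contributions should reproduce exactly $\tfrac12 \underline{m}^\top C \underline{m}$ against the half-power of $q$. The linear factor $\prod_j q_{j+a}^{m_j}$ then records the net imbalance among colors.

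To finish, I would confirm the boundary cases as consistency checks: setting all $q_c$ equal to a single $q$ must collapse the entire right-hand side to $\prod_{m\geq 1}(1-q^m)^{-n}$ times the full theta sum evaluated at $q_{j+a}=q$, and should recover the known uncolored count and the Jacobi-triple-product degenerations that the paper later discusses as infinite products. The only genuinely delicate step, as noted, is ensuring the residue-class decomposition of the strictly-decreasing Frobenius entries is carried out without over- or under-counting the diagonal boxes and without sign errors in the direction of the leg colors; once the bijection is pinned down, the quadratic form emerges by a routine but careful summation of triangular numbers, and the Euler product factor comes from the free (unconstrained modulo the residue offsets) growth of the part sizes.
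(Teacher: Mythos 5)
Your proposal follows essentially the same route as the paper: encode each diagram by its (colored) Frobenius symbol, apply the multivariable Andrews constant-term identity from Section~2 to decouple the two rows, observe that the strictly decreasing parts sorted into residue classes yield a free Euler-product factor in $q$ times triangular-number exponents $\binom{j_i+1}{2}$ (the combinatorial content of the Jacobi triple product, which is what the paper invokes explicitly), and pass from the constrained vector $(j_0,\dots,j_{n-1})$ with $\sum_i j_i=0$ to $\underline{m}\in\SZ^{n-1}$ to identify the quadratic form with the $A_{n-1}$ Cartan matrix. The computations you defer (the product formula for each row and the verification that $\sum_i\binom{j_i+1}{2}=\tfrac12\,\underline{m}^\top C\,\underline{m}$) are exactly the ones the paper carries out, and your outline of them is sound.
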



Finally, in Section 4 we obtain the following representations:
\begin{corollary}
\label{cor:n23}
\begin{enumerate}
\item For $n=2$ and $a=0$,
\begin{equation}
\label{eq:inf2}
Z_0(\underline{q})=\prod_{m=1}^\infty \frac{(1+q_{1}q^{2m-1})(1+q_{0}q^{2m-2})}{(1-q^m)(1-q^{2m-1})}\;.
\end{equation}
\item For $n=3$ and $a=0$,
\begin{multline}
\label{eq:inf3}
Z_0(\underline{q})=\\ \prod_{m=1}^\infty \frac{(1-q^{6m})(1+q_1q_2^2q^{6m-3})(1+q_0^2q_1q^{6m-5})(1+q_1q^{2m-1})(1+q_0q_2q^{2m-2})}{(1-q^m)^2(1-q^{2m-1})} \\
+q_0\cdot \\ \prod_{m=1}^\infty \frac{(1-q^{6m})(1+q_1q_2^2q^{6m-6})(1+q_0^2q_1q^{6m-1})(1+q_1q^{2m-2})(1+q_0q_2q^{2m-1})}{(1-q^m)^2(1-q^{2m-1})}
\;.
\end{multline}
\end{enumerate}
\end{corollary}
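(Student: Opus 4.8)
The plan is to specialize Theorem \ref{thm:coldiaggen} at $n=2$ and $n=3$ and then convert the resulting lattice theta functions into infinite products by repeated use of the Jacobi triple product identity
\[
\sum_{m\in\SZ} z^m x^{m^2} \;=\; \prod_{k=1}^\infty (1-x^{2k})(1+z\,x^{2k-1})(1+z^{-1}x^{2k-1}),
\]
valid for $|x|<1$. The one trick that makes the output match the stated form is to use the defining relation $q=\prod_i q_i$ to trade the inverse monomials coming from the $z^{-1}$-factor for genuine monomials in the $q_i$: for $n=2$ one has $q_1^{-1}q^{2k-1}=q_0 q^{2k-2}$, and for $n=3$ one has $q_1^{-1}q^{2k-1}=q_0 q_2 q^{2k-2}$ and $(q_1 q_2^2)^{-1}q^{6k-3}=q_0^2 q_1 q^{6k-5}$. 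I would also record once and for all the elementary rearrangement $\big(\prod_m(1-q^m)^{-1}\big)^n\prod_k(1-q^{2k})=\prod_m(1-q^m)^{-(n-1)}(1-q^{2m-1})^{-1}$ (split $\prod_m(1-q^m)$ into even and odd parts), which is what turns the prefactor of Theorem \ref{thm:coldiaggen} into the denominators appearing in (\ref{eq:inf2}) and (\ref{eq:inf3}).

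For $n=2$ the Cartan matrix is $C=(2)$, so $\underline m^\top C\,\underline m=2m_1^2$ and Theorem \ref{thm:coldiaggen} reads
\[
Z_0(\underline q)=\Big(\prod_{m=1}^\infty(1-q^m)^{-1}\Big)^2\sum_{m_1\in\SZ}q_1^{m_1}q^{m_1^2}.
\]
Applying the triple product with $x=q$, $z=q_1$ and rewriting $q_1^{-1}q^{2k-1}=q_0q^{2k-2}$ turns the sum into $\prod_k(1-q^{2k})(1+q_1q^{2k-1})(1+q_0q^{2k-2})$; folding in the prefactor via the rearrangement above yields (\ref{eq:inf2}) immediately. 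This case is purely mechanical and serves as the template for $n=3$.

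For $n=3$ the Cartan matrix is $C=\left(\begin{smallmatrix}2&-1\\-1&2\end{smallmatrix}\right)$, so $\underline m^\top C\,\underline m=2(m_1^2-m_1m_2+m_2^2)$ and Theorem \ref{thm:coldiaggen} produces the rank-two $A_2$-theta function
\[
\Theta=\sum_{m_1,m_2\in\SZ}q_1^{m_1}q_2^{m_2}\,q^{\,m_1^2-m_1m_2+m_2^2}.
\]
The cross term $-m_1m_2$ blocks a single application of the triple product, so the key idea is to \emph{dissect} $\SZ^2$ into the two cosets of an orthogonal rank-two sublattice of index $2$ on which the form $Q(m_1,m_2)=m_1^2-m_1m_2+m_2^2$ diagonalizes; this dissection is exactly what produces the two summands of (\ref{eq:inf3}). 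Concretely the vectors $(1,0)$ and $(1,2)$ are $Q$-orthogonal with $Q(1,0)=1$, $Q(1,2)=3$, and parametrizing the sublattice $\{m_2\text{ even}\}$ by $(m_1,m_2)=(s+t,2t)$ gives $Q=s^2+3t^2$ and $q_1^{m_1}q_2^{m_2}=q_1^{s}(q_1q_2^2)^{t}$, so that $\Theta_{\mathrm{even}}=\big(\sum_s q_1^s q^{s^2}\big)\big(\sum_t(q_1q_2^2)^t q^{3t^2}\big)$. Each factor is now a single-variable theta, summed by the triple product with nomes $q$ and $q^3$; after the substitutions above and folding the prefactor together with the emergent $\prod_k(1-q^{2k})$ and $\prod_k(1-q^{6k})$ exactly as for $n=2$, this reproduces the first product in (\ref{eq:inf3}) verbatim.

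The nontrivial coset $\{m_2\text{ odd}\}$, parametrized by $(m_1,m_2)=(s+t,2t+1)$, is handled the same way but now with \emph{half-integer characteristics}: one finds $Q=s^2-s+3t^2+3t+1=(s-\tfrac12)^2+3(t+\tfrac12)^2$ together with the monomial prefactor $q_2\,q=q_0q_1q_2^2$, so $\Theta_{\mathrm{odd}}=q_2q\big(\sum_s q_1^s q^{s^2-s}\big)\big(\sum_t(q_1q_2^2)^t q^{3t^2+3t}\big)$, and each shifted theta again factors by the triple product. Collecting this contribution produces the second product in (\ref{eq:inf3}), the leading $q_0$ arising once the prefactor $q_0q_1q_2^2$ is combined with the lowest factor $(1+q_1^{-1}q_2^{-2})$ coming from the $z^{-1}$-term of the shifted $q^3$-theta. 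I expect this coset to be the main obstacle: the half-integer shift puts the two product families into residue classes displaced from the even case, so one must peel off the boundary factor, reindex the infinite products ($\prod_{m\ge1}(1+c\,q^{6m-6})=(1+c)\prod_{m\ge1}(1+c\,q^{6m})$), and redistribute monomials through $q=q_0q_1q_2$ to land on precisely the stated exponents, all while checking that the stray prefactor collapses to exactly $q_0$. A low-order expansion of both sides in the $q_i$ (which I would carry out to a few total degrees, and where the agreement of the coefficients of $q_0$, $q_0q_1$ and $q_0q_1q_2^2$ is already a reassuring check) is the most efficient way to catch any slip in this bookkeeping.
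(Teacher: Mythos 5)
Your proposal follows the paper's own proof essentially verbatim: specialize Theorem \ref{thm:coldiaggen}, and for $n=3$ split the $A_2$ theta function according to the parity of $m_2$, diagonalize the quadratic form on each coset (the paper's substitution $m_1\mapsto m_1+m_2$ is exactly your $(s+t,2t)$, $(s+t,2t\pm1)$ parametrization), apply the Jacobi triple product to each one-variable theta, and absorb the boundary factors and the monomial prefactor using $q=q_0q_1q_2$. One caveat so you are not thrown off when your bookkeeping does not reproduce \eqref{eq:inf3} exactly: the $z^{-1}$-family of the shifted $q^3$-theta gives $(1+(q_1q_2^2)^{-1}q^{6m})=(1+q_0^2q_1q^{6m-2})$ (a total-degree count, $-3+18m=3+3(6m-2)$, confirms the exponent), so the factor $(1+q_0^2q_1q^{6m-1})$ printed in the second product of \eqref{eq:inf3} is a typo for $(1+q_0^2q_1q^{6m-2})$, not an error in your computation.
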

\section{Generalized Frobenius partitions}

\subsection{Uncolored case}
\begin{definition} Two rows of nonnegative integers
\[
\begin{pmatrix}
f_1 & f_2 & \dots & f_d \\
g_1 & g_2 & \dots & g_d
\end{pmatrix} \]
are called a \emph{generalized Frobenius partition} or \emph{F-partition} of $k$ if \[k=d + \sum_{i=1}^d (f_i+g_i)\;.\]
\end{definition}
\begin{remark} A generalized Frobenius partition is a classical Frobenius partition if moreover $f_1 > f_2 > \dots > f_d \geq 0$ and $g_1 > g_2 > \dots > g_d \geq 0$. In this case, we can associate to the F-partition a Young diagram from which if we delete the $d$ long diagonal then the lengths of the rows below it are $f_1$, $f_2$, etc. and the length of the columns above the diagonal are $g_1$,  $g_2$, etc. This correspondence between Young diagrams and classical F-partitions is bijective.
\end{remark}

Let $H$ be an arbitrary set consisting of finite sequences of nonnegative integer. For arbitrary integers $d$ and $k$ let $P_H(k,d)$ denote the sequences in $H$ of length $d$ which sum up to $k$. For any pair of such sets $H_1$ and $H_2$, let moreover $P_{H_1,H_2}(k)$ be the number of generalized Frobenius partitions of $k$ with elements in the first row $(f_1,\ldots, f_d)$ from $H_1$ and with elements in the second row $(g_1,\ldots, g_d)$ from $H_2$. Then the very useful result of Andrews says the following
\begin{theorem}[\cite{andrews1984generalized}, Section 3]
\label{thm:genfrobgs}
\[ \sum_{k=0}^\infty P_{H_1,H_2}(k)q^k=[z^0] \sum_{k,m}P_{H_1}(k,d)q^k(zq)^d\sum_{k,d}P_{H_2}(k,d)q^kz^{-d}\;, \]
where $[z^m]\sum A_kz^k=A_m$.
\end{theorem}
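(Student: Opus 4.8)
The plan is to recognize the right-hand side as a bookkeeping device that faithfully encodes the two defining constraints of an F-partition—that the two rows share a common length $d$, and that the weight is $k = d + \sum_i(f_i+g_i)$—through the auxiliary variable $z$ and the shifted factor $(zq)^d$.

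First I would rewrite the count $P_{H_1,H_2}(k)$ as a convolution. By definition, an F-partition of $k$ with first row in $H_1$ and second row in $H_2$ consists of a length $d$, a sequence $(f_1,\dots,f_d)\in H_1$ with $\sum_i f_i = k_1$, and a sequence $(g_1,\dots,g_d)\in H_2$ with $\sum_i g_i = k_2$, subject to $k = d + k_1 + k_2$. Grouping the F-partitions by the common length $d$ and by the two row-sums $k_1,k_2$ then gives
\[
P_{H_1,H_2}(k) = \sum_{d\geq 0}\ \sum_{k_1+k_2+d=k} P_{H_1}(k_1,d)\,P_{H_2}(k_2,d)\;.
\]

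Next I would expand and multiply the two factors on the right. Writing $A(z)=\sum_{k,d}P_{H_1}(k,d)\,q^k(zq)^d$ and $B(z)=\sum_{k,d}P_{H_2}(k,d)\,q^k z^{-d}$, the product $A(z)B(z)$ is a sum over pairs $(k_1,d_1)$ and $(k_2,d_2)$ with monomial $q^{\,k_1+d_1+k_2}\,z^{\,d_1-d_2}$. The role of $z$ is precisely to pair up rows of equal length: applying $[z^0]$ forces $d_1=d_2=:d$, which is exactly the Frobenius condition that the two rows have the same number of columns. With that identification the $q$-exponent becomes $k_1+k_2+d$, where the extra summand $d$ comes from the factor $q^d$ hidden inside $(zq)^d$ and matches the term $d$ in the weight formula $k=d+\sum_i(f_i+g_i)$.

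Finally I would collect terms by total $q$-degree $k=k_1+k_2+d$, obtaining
\[
[z^0]\,A(z)B(z) = \sum_{k}q^k\sum_{d\geq 0}\sum_{k_1+k_2+d=k}P_{H_1}(k_1,d)\,P_{H_2}(k_2,d) = \sum_k P_{H_1,H_2}(k)\,q^k\;,
\]
which is the claimed identity. The argument is essentially formal, so the only point demanding care—and the main thing to verify—is that the two geometric constraints of an F-partition (equal row length, and the weight shift by $d$) are encoded bijectively by the opposite $z$-exponents of the two factors and by the displacement $q\mapsto zq$ in the first factor; once this correspondence is confirmed, the remainder is a routine manipulation of formal power series, valid since each coefficient is a finite sum.
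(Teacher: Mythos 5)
Your argument is correct and is essentially the standard proof from Andrews' memoir (the paper itself only cites this result without reproducing the proof): the coefficient extraction $[z^0]$ enforces equal row lengths $d_1=d_2$, and the shift $q\mapsto zq$ in the first factor supplies the extra $q^d$ accounting for the diagonal in $k=d+\sum_i(f_i+g_i)$. The only cosmetic remark is that the index $m$ in the first sum of the statement is a typo for $d$, which you have implicitly and correctly repaired.
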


The term $q^d$ in the first term of the right hand size corresponds to the contribution of the diagonals. To have a more symmetric formula we will slightly change the notions. Transform each generalized Frobenius partition
\[
\begin{pmatrix}
f_1 & f_2 & \dots & f_d \\
g_1 & g_2 & \dots & g_d
\end{pmatrix} \]
into
\[
\begin{pmatrix}
f_1+1 & f_2+1 & \dots & f_d+1 \\
g_1 & g_2 & \dots & g_d
\end{pmatrix} \;.\]
Then\[k=\sum_{i=1}^d ((f_i+1)+g_i)\;.\]

For $H$ an arbitrary set of sequences as above, let $H'=\{ (f_1+1,\dots,f_d+1) \;:\; (f_1,\dots,f_d) \in H \}$ be the \emph{shift} of $H$ by one upward. In particular, elements of $H'$ consist of sequences of strictly positive integers. Then $P_{H'}(k,d)=P_{H}(k-d,d)$, and
\begin{equation} 
\label{eq:genfrobgsmod}
\sum_{k=0}^\infty P_{H'_1,H_2}(k)q^k=[z^0] \sum_{k,d}P_{H'_1}(k,d)q^{k}z^d\sum_{k,d}P_{H_2}(k,d)q^kz^{-d}\;. 
\end{equation}
The advantage of \eqref{eq:genfrobgsmod} is that it can be used in a more general context. Namely, elements of $H'_1$ and $H_2$ can be both arbitrary sequences of nonnegative entries.

\subsection{Colored case}
\label{subsec:colfpart}

We aim for a multivariable generalization of Theorem \ref{thm:genfrobgs}. Consider first an arbitrary finite coloring set $C$ and let $\underline{k}$ be a vector of nonnegative integers indexed by the elements of $C$.
\begin{definition}
Two series of vectors consisting of integers and arranged into two rows as
\[
\begin{pmatrix}
\underline{f}_1 & \underline{f}_2 & \dots & \underline{f}_d \\
\underline{g}_1 & \underline{g}_2 & \dots & \underline{g}_d
\end{pmatrix} \]
are called a \emph{colored generalized Frobenius partition} or \emph{colored F-partition} of $\underline{k}$ if
\begin{enumerate}
\item the elements in $\underline{f}_i$ and $\underline{g}_i$ are indexed by the elements $c \in C$ for each $1\leq i\leq d$;
\item $f_{i,c} \geq 0$ and $g_{i,c} \geq 0$ for every $1\leq i\leq d$ and $c \in C$;
\item $\sum_{i=1}^d (f_{i,c} + g_{i,c})=k_c$ for each $c \in C$.
\end{enumerate}
We will call $k=\sum_{c\in C} k_c=\sum_{c\in C} \sum_{i=1}^d (f_{i,c} + g_{i,c})$ the \emph{total weight} of such a colored F-partition.
\end{definition}
At the moment we do not require any further relations between the elements $f_{i,c}$ and $g_{i,c}$ but see \autoref{sec:coldiagproof} and particularly \autoref{ex:2diag} below, where we apply this general construction to the enumeration of diagonally colored Young diagrams.

Let $H$ be an arbitrary set consisting of tuples of vectors, each of which is indexed by elements of $C$. For an arbitrary vector $\underline{k}$ indexed by the elements of $C$ and consisting of nonnegative integers let $P_{H}(\underline{k},d)$ be the number of $d$-tuples of vectors $(\underline{f}_1,\dots, \underline{f}_d) \in H$  which satisfy conditions (1) and (2) such that $\sum_{i=1}^{d} f_{i,c}=k_c$. 
If both $H_1$ and $H_2$ are sets consisting of tuples of vectors, each element of which is indexed by elements of $C$, then let  $P_{H_1,H_2}(\underline{k})$ be the number of colored F-partition of $\underline{k}$ in which the top row is in $H_1$ and the bottom row is in $H_2$. 

Then the same ideas as that of Theorem \ref{thm:genfrobgs} imply the following multivariable analogue of (\ref{eq:genfrobgsmod}).
\begin{theorem}
\label{thm:genfrobmgs}
\[ \sum_{\underline{k}} P_{H_1,H_2}(\underline{k})\underline{q}^{\underline{k}}=[z^0] \sum_{\underline{k},d}P_{H_1}(\underline{k},d)z^d\underline{q}^{\underline{k}}\sum_{\underline{k},d}P_{H_2}(\underline{k},d)z^{-d}\underline{q}^{\underline{k}}\;. \]
\end{theorem}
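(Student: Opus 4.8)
The plan is to prove Theorem \ref{thm:genfrobmgs} by mimicking the proof of Theorem \ref{thm:genfrobgs} (Andrews) at the level of generating functions, with the single scalar variable $q$ replaced by the vector of variables $\underline{q}$ and the exponent bookkeeping done color-by-color. The key conceptual point is that a colored F-partition of $\underline{k}$ is nothing but a choice of a common length $d$, a top row $(\underline{f}_1,\dots,\underline{f}_d)\in H_1$, and a bottom row $(\underline{g}_1,\dots,\underline{g}_d)\in H_2$, subject only to the \emph{color-wise} constraint $\sum_i(f_{i,c}+g_{i,c})=k_c$ for each $c\in C$. Since conditions (1)--(3) impose no interaction between the two rows beyond their sharing the same length $d$, the generating function for pairs factors once we stratify by $d$.

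First I would expand the right-hand side. Writing $A(z,\underline{q})=\sum_{\underline{k},d}P_{H_1}(\underline{k},d)\,z^{d}\underline{q}^{\underline{k}}$ and $B(z,\underline{q})=\sum_{\underline{k},d}P_{H_2}(\underline{k},d)\,z^{-d}\underline{q}^{\underline{k}}$, the product $A\cdot B$ collects, in the coefficient of $z^{d_1-d_2}$, all pairs consisting of a top row of length $d_1$ with color-content $\underline{k}^{(1)}$ and a bottom row of length $d_2$ with color-content $\underline{k}^{(2)}$, weighted by $\underline{q}^{\underline{k}^{(1)}+\underline{k}^{(2)}}$. Extracting $[z^{0}]$ forces $d_1=d_2=:d$, which is exactly the requirement that the two rows of an F-partition have equal length. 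The surviving terms therefore range over all triples $(d,\text{top row},\text{bottom row})$ with matching length, and the total color-content is $\underline{k}=\underline{k}^{(1)}+\underline{k}^{(2)}$, whose $c$-component is $\sum_i(f_{i,c}+g_{i,c})=k_c$. Hence the coefficient of $\underline{q}^{\underline{k}}$ in $[z^0]A\cdot B$ equals the number of colored F-partitions of $\underline{k}$ with top row in $H_1$ and bottom row in $H_2$, i.e. $P_{H_1,H_2}(\underline{k})$, which is precisely the left-hand side.

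Concretely, the identity
\[
[z^0]\,A(z,\underline{q})B(z,\underline{q})=\sum_{d}\Bigl(\sum_{\underline{k}^{(1)}}P_{H_1}(\underline{k}^{(1)},d)\underline{q}^{\underline{k}^{(1)}}\Bigr)\Bigl(\sum_{\underline{k}^{(2)}}P_{H_2}(\underline{k}^{(2)},d)\underline{q}^{\underline{k}^{(2)}}\Bigr)
\]
makes the factorization over the shared index $d$ explicit, and then the Cauchy-product in the $\underline{q}$-variables regroups the double sum by the combined content $\underline{k}=\underline{k}^{(1)}+\underline{k}^{(2)}$. The only care required is to verify that the $z$-degree bookkeeping correctly enforces equal row length in the multivariable setting exactly as in the scalar case; since $z$ is untouched by the coloring, the argument is word-for-word Andrews', and this justifies the claim that ``the same ideas'' carry over.

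The main obstacle, such as it is, is purely formal rather than mathematical: one must ensure that all the multivariable power series in question are well defined as formal (or, under $|q_c|\ll1$, convergent) series, so that the rearrangement of the double sum by $\underline{k}$ and the extraction of the constant term in $z$ are legitimate. Provided each $P_{H_i}(\underline{k},d)$ is finite and, for fixed total weight, only finitely many $(\underline{k},d)$ contribute a given monomial $\underline{q}^{\underline{k}}$ with bounded $d$, the coefficient extraction commutes with the summations and the proof goes through. I expect no genuine difficulty beyond checking this summability, so the bulk of the proof is the indexing argument above.
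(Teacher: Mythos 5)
Your proposal is correct and is exactly the argument the paper intends: the paper gives no explicit proof, merely asserting that ``the same ideas'' as Andrews' constant-term argument carry over, and your expansion of the product, with $[z^0]$ forcing the two rows to have equal length $d$ and the $\underline{q}$-exponents tracking the color-wise content $\sum_i(f_{i,c}+g_{i,c})=k_c$, is precisely that argument adapted to the multivariable setting. Your closing remark on summability is a reasonable formal check but raises no real issue here.
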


\section{Proof of Theorem \ref{thm:coldiaggen}}
\label{sec:coldiagproof}

We return to the setting of Section \ref{sec:intro}. We let $C=\mathbb{Z}/n\mathbb{Z}$ and fix an $a \in C$. To be able to apply Theorem \ref{thm:genfrobmgs} we first associate to each Young diagram $Y \in \mathcal{P}$ a colored F-partition of $\underline{k}=\{ \mathrm{wt}_c(a,Y) \}_{c \in C}$ which uniquely describes the diagonal $a$-coloring on $Y$. Assume that the main diagonal of $Y$ consists of $d$ blocks. Then the associated colored $F$-partition is
\[
\begin{pmatrix}
\underline{f}_1 & \underline{f}_2 & \dots & \underline{f}_d \\
\underline{g}_1 & \underline{g}_2 & \dots & \underline{g}_d
\end{pmatrix} \;,\]
where ${f_{i,c}}$ is the number of blocks of color $c$ in the $i$-th row below and including the main diagonal, and ${g_{i,c}}$ is the number of blocks of color $c$ in the $i$-th column above the main diagonal for every $1 \leq i \leq d$.
\begin{example} 
\label{ex:2diag}
The colored F-partition associated to the first diagram in Example \ref{ex:1} is
\[
\begin{pmatrix}
(1,1,1) & (1,0,1) \\
(1,1,1) & (0,1,0)
\end{pmatrix}\;, \]
where each $\underline{f}_i=(f_{i,0},f_{i,1},f_{i,2})$ and each $\underline{g}_i=(g_{i,0},g_{i,1},g_{i,2})$.
\end{example}

For $i=1,2$ let $H_i$ be the set of tuples of vectors which can appear as the $i$-th row of a colored F-partition associated to a diagonally $a$-colored Young diagram in the above construction. We omitted from the notation the dependence on $a \in C$. Then
\begin{equation} 
\label{eq:colgen}
Z_a(\underline{q})= \sum_{\underline{k}} P_{H_1,H_2}(\underline{k})\underline{q}^{\underline{k}}\;. 
\end{equation}

\begin{example}
For $a=0$, consider the following linearly ordered set of vectors:
\[ \{ (1,0,\dots,0) < (1,1,0,\dots,0) < \dots < (1,\dots,1) < (2,1,\dots,1) < \dots   \}.\]
The elements of each vector are indexed by the set $C=\{0,\dots,n-1\}$.
Then, in our setting $H_1$ consists of finite, decreasing sequences with elements from this ordered set. Similarly, $H_2$ consists of finite, decreasing sequences with elements from the ordered set
\[ \{ (0,\dots,0,0) < (0,\dots,0,1) < \dots < (1,\dots,1,1) < (1,\dots,1,2) < \dots   \}.\]
\end{example}

\begin{lemma}
\label{lem:fblinegen}
\begin{enumerate}
\item 
\[ \sum_{\underline{k}} P_{H_1}(\underline{k},d)z^d \underline{q}^{\underline{k}}= \prod_{k=0}^\infty\prod_{i=0}^{n-1}(1+zq_{0+a}\dots q_{i+a}q^k)\;. \]
\item 
\[ \sum_{\underline{k}} P_{H_2}(\underline{k},d)z^d \underline{q}^{\underline{k}}= \prod_{k=0}^\infty\prod_{i=0}^{n-1}(1+z^{-1}q_{i+1+a}\dots q_{n-1+a}q^k)\;. \]
\end{enumerate}
\end{lemma}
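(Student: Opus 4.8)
The plan is to make the sets $H_1$ and $H_2$ explicit via the geometry of the diagonal decomposition, to rewrite the count as a sum over finite sets of integers, and then to re-index the resulting infinite product so that it matches the stated double product.

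First I would record the combinatorial meaning of the two rows. By construction the $i$-th diagonal box $(i,i)$ of $Y$ has residue $a$; the boxes $(i,i),(i,i+1),\dots$ of its arm have residues $a,a+1,a+2,\dots$, and the boxes $(i+1,i),(i+2,i),\dots$ of its leg have residues $a-1,a-2,\dots$ (all modulo $n$). Hence, writing $A_i\ge 1$ for the length of the $i$-th arm (counting the diagonal box itself) and $L_i\ge 0$ for the length of the $i$-th leg, the vector $\underline{f}_i$ is completely determined by $A_i$, since it merely records how often each residue occurs among $a,a+1,\dots,a+A_i-1$; likewise $\underline{g}_i$ is determined by $L_i$. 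The essential structural input is the classical Frobenius correspondence: for a genuine Young diagram the arm lengths satisfy $A_1>A_2>\dots>A_d\ge 1$ and the leg lengths satisfy $L_1>L_2>\dots>L_d\ge 0$, the two rows may be chosen independently, and every such pair of strictly decreasing sequences arises. Consequently $H_1$ is exactly the set of tuples $(\underline{f}_1,\dots,\underline{f}_d)$ whose arm lengths strictly decrease, and $H_2$ the analogous set for legs. This identification is the step I expect to carry the real content of the lemma; everything afterwards is formal.

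Next I would turn the count into a sum over finite subsets of integers. Define the arm weight $w(A)=\prod_{t=0}^{A-1}q_{a+t}$, so that $\underline{q}$ raised to the colour vector of an arm of length $A$ equals $w(A)$; because $\sum_c f_{i,c}=A_i$, a tuple in $H_1$ is the same datum as a finite set $\{A_1>\dots>A_d\}\subseteq\SZ_{\ge 1}$. Tracking the number $d$ of columns by the variable $z$, the generating series on the left of part (1) becomes a sum over all finite subsets $S\subseteq\SZ_{\ge 1}$ of $z^{|S|}\prod_{A\in S}w(A)$, which factors as $\prod_{A=1}^\infty\bigl(1+z\,w(A)\bigr)$.

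Finally I would re-index this product. Using $q=\prod_{c}q_c$ and that indices are read modulo $n$, a short computation gives $w(kn+i+1)=q^{k}\,q_{0+a}q_{1+a}\cdots q_{i+a}$ for $k\ge 0$ and $0\le i\le n-1$. Since $A\mapsto(k,i)$ defined by $A=kn+i+1$ is a bijection $\SZ_{\ge 1}\to\SZ_{\ge 0}\times\{0,\dots,n-1\}$, the product $\prod_{A\ge 1}\bigl(1+z\,w(A)\bigr)$ rearranges into $\prod_{k=0}^\infty\prod_{i=0}^{n-1}\bigl(1+z\,q_{0+a}\cdots q_{i+a}q^k\bigr)$, which is part (1). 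Part (2) is entirely parallel: with the leg weight $u(L)=\prod_{t=1}^{L}q_{a-t}$ (so $u(0)=1$), the strictly decreasing leg lengths range over finite subsets of $\SZ_{\ge 0}$, and, tracking $d$ by $z^{-1}$ as in Theorem \ref{thm:genfrobmgs}, the series equals $\prod_{L=0}^\infty\bigl(1+z^{-1}u(L)\bigr)$. The substitution $L=kn+(n-1-i)$ gives $u(L)=q^{k}q_{i+1+a}\cdots q_{n-1+a}$, and the same bijective re-indexing yields the second product. The only genuine obstacle is the identification of $H_1$ and $H_2$ with all strictly decreasing sequences of arm, respectively leg, lengths; the two weight re-indexings are routine bookkeeping with the cyclic colouring.
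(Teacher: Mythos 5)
Your proposal is correct and follows essentially the same route as the paper: the paper's (very terse) proof likewise identifies each monomial $zq_{0+a}\cdots q_{i+a}q^k$ with an arm of length $nk+i+1$ read off the cyclic colouring, relying on the preceding example's identification of $H_1,H_2$ with strictly decreasing sequences from the ordered sets of arm/leg colour vectors. You have merely written out in full the Frobenius correspondence, the sum-over-subsets factorisation, and the re-indexing $A=kn+i+1$ (and $L=kn+n-1-i$) that the paper leaves implicit.
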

\begin{proof} (1) It is clear that each term $zq_{0+a}\dots q_{i+a}q^k$ corresponds to a part of a column above and including the main diagonal which has length $nk+i$. Conversely, the decomposition of each nonnegative number as $nk+i$ is unique.

The proof of (2) is similar.
\end{proof}

The product of the two generating series in Lemma \ref{lem:fblinegen} is
\begin{equation}
\label{eq:prodgen}
\begin{gathered}
\sum_{\underline{k}} P_{H_1}(\underline{k},d)z^d \underline{q}^{\underline{k}} \cdot \sum_{\underline{k}} P_{H_2}(\underline{k},d)z^d \underline{q}^{\underline{k}} \\= \prod_{k=0}^\infty\prod_{i=0}^{n-1}(1+zq_{0+a}\dots q_{i+a}q^k)(1+z^{-1}q_{i+1+a}\dots q_{n-1+a}q^k)\\
= \prod_{k=1}^\infty\prod_{i=0}^{n-1}(1+zq_{i+1+a}^{-1}\dots q_{n-1+a}^{-1}q^k)(1+(zq_{i+1+a}^{-1}\dots q_{n-1+a}^{-1})^{-1}q^{k-1})\\
=\left( \prod_{m=1}^\infty (1-q^m)^{-1} \right)^{n} \prod_{i=0}^{n-1} \left( \sum_{j_i=-\infty}^{\infty} (zq_{i+1+a}^{-1}\dots q_{n-1+a}^{-1})^{j_i}q^{\binom{j_i+1}{2}}\right)\;,
\end{gathered}
\end{equation}
where at the last equality we have used the following form of the Jacobi triple product formula:
\[ \prod_{n=1}^{\infty}(1+zq^n)(1+z^{-1}q^{n-1})= \left(\prod_{n=1}^{\infty}(1-q^n)^{-1}\right)\sum_{j=-\infty}^{\infty}z^jq^{\binom{j+1}{2}}\;.\]

By (\ref{eq:colgen}) and Theorem \ref{thm:genfrobmgs} to obtain $Z_a(\underline{q})$ we have to calculate the coefficient of $z^{0}$ in (\ref{eq:prodgen}).

\begin{equation}
\label{eq:z0coeff}
\begin{array}{rcl}
Z_a(\underline{q})&=&[z^0]\Big(\left( \prod_{m=1}^\infty (1-q^m)^{-1} \right)^{n}\cdot \\ & & \prod_{i=0}^{n-1} \left( \sum_{j_i=-\infty}^{\infty} (zq_{i+1+a}^{-1}\dots q_{n-1+a}^{-1})^{j_i}q^{\binom{j_i+1}{2}}\right)\Big)\\
&=& \left( \prod_{m=1}^\infty (1-q^m)^{-1} \right)^{n} \cdot \\ & &\sum_{ \substack{ \underline{j}=(j_0,\dots,j_{n-1}) \in \SZ^{n} \\ \sum_i j_i=0}} q_{1+a}^{-j_0}\cdot\dots\cdot q_{n-1+a}^{-j_0-\dots-j_{n-2}}q^{\sum_{i=0}^{n-1}\binom{j_i+1}{2}}\;.
\end{array}
\end{equation}

Let us introduce the following series of integers:
\begin{equation}
\label{eq:mjcorr}
\begin{array}{r c l}
m_1& = & -j_0\;, \\
m_2 & = & -j_0-j_1 \;, \\
& \vdots & \\
m_{n-1} & = & -j_0-j_1-\dots-j_{n-2} \;. \\
\end{array}
\end{equation}
It is obvious that the map
\[ 
\begin{array}{r c l}
\left\{ (j_0,\dots,j_{n-1}) \in \SZ^{n}\;:\; \sum_i j_i=0 \right\} & \rightarrow &\SZ^{n-1} \\
 \quad (j_0,\dots,j_{n-1}) &\mapsto& (m_1,\dots,m_{n-1})
\end{array}
\]
is a bijection. The inverse of it is
\[
\begin{array}{r c l}
j_0& = & -m_1\;, \\
j_1 & = & -m_2+m_1 \;, \\
& \vdots & \\
j_{n-2} & = & -m_{n-1}+m_{n-2} \;, \\
j_{n-1} & =& m_{n-1}\;.
\end{array}
\]

If $n=2$, then
\begin{equation}
\label{eq:trcart1}
\sum_{i=0}^{1}\binom{j_i+1}{2} = \binom{-m_1+1}{2} + \binom{m_{1}+1}{2} = m_1^2 = \frac{1}{2}\left(\underline{m}^\top \cdot C \cdot \underline{m}\right)\;,
\end{equation}
where $C=(2)$ is the Cartan matrix of type $A_{1}$.

If $n>2$, then
\begin{equation}
\label{eq:trcart2}
\begin{aligned}
\sum_{i=0}^{n-1}\binom{j_i+1}{2} & = \binom{-m_1+1}{2} + \sum_{i=1}^{n-2}\binom{-m_{i+1}+m_{i}+1}{2}+ \binom{m_{n-1}+1}{2} \\
& = m_1^2+\dots+m_{n-1}^2 -\sum_{i=1}^{n-2}m_im_{i+1} \\
&= \frac{1}{2}\left(\underline{m}^\top \cdot C \cdot \underline{m}\right)\;,
\end{aligned}
\end{equation}
where 
\[C=
\begin{pmatrix}
2 & -1 & & & & \\
-1 & 2 & -1 & & & \\
& -1 & 2 & & &\\
& & & \ddots & & \\
& & & & & -1\\
& & & & -1 & 2
\end{pmatrix}
\] is the Cartan matrix of type $A_{n-1}$.

Equations (\ref{eq:z0coeff}), (\ref{eq:trcart1}) and (\ref{eq:trcart2}) together immediately imply Theorem \ref{thm:coldiaggen} for all $n > 1$.



\section{Proof of Corollary \ref{cor:n23}}

\subsection{Proof of (\ref{eq:inf2})}

For $n=2$ and $a=0$,
\[\begin{aligned}Z_0(\underline{q}) & =  \frac{\sum_{m_1=-\infty}^{\infty}q_1^{m_1}q^{m_1^2}}{\prod_{m=1}^{\infty}(1-q^m)^2}\\ & = \frac{\prod_{m=1}^{\infty}(1-q^{2m})(1+q_1^{2m-1})(1+q_1^{-1}q^{2m-1})}{\prod_{m=1}^{\infty}(1-q^m)^2} \\
&  =\prod_{m=1}^{\infty}\frac{(1+q_1q^{2m-1})(1+q_0q^{2m-2})}{(1-q^m)(1-q^{2m-1})}\;,
\end{aligned}\]
where at the second equality we have used the following form of the Jacobi triple product identity:
\begin{equation} 
\label{eq:jactr2}
\prod_{n=1}^{\infty}(1-q^{2n})(1+zq^{2n-1})(1+z^{-1}q^{2n-1})= \sum_{j=-\infty}^{\infty}z^jq^{j^2}\;.
\end{equation}

\subsection{Proof of (\ref{eq:inf3})}

 For $n=3$ and $a=0$, the numerator of the generating series can be written as the sum
\[\sum_{m_1,m_2=-\infty}^{\infty}q_1^{m_1}q_2^{m_2}q^{m_1^2+m_2^2-m_1m_2}=\]
\begin{align*}
=&\sum_{m_1,m_2=-\infty}^{\infty}q_1^{m_1}q_2^{2m_2}q^{m_1^2+(2m_2)^2-2m_1m_2}\\ &+\sum_{m_1,m_2=-\infty}^{\infty}q_1^{m_1}q_2^{2m_2-1}q^{m_1^2+(2m_2-1)^2-m_1(2m_2-1)} \\
 =&\sum_{m_2=-\infty}^{\infty}q_1^{m_2}q_2^{2m_2}q^{3m_2^2}\sum_{m_1=-\infty}^{\infty}q_1^{m_1-m_2}q^{(m_1-m_2)^2} \\* &+\sum_{m_2=-\infty}^{\infty}q_1^{m_2}q_2^{2m_2-1}q^{3m_2^2-3m_2+1}\sum_{m_1=-\infty}^{\infty}q_1^{m_1-m_2}q^{(m_1-m_2)^2+m_1-m_2} \\
=&\sum_{m_2=-\infty}^{\infty}q_1^{m_2}q_2^{2m_2}q^{3m_2^2}\sum_{m_1=-\infty}^{\infty}q_1^{m_1}q^{m_1^2}\\* &+\sum_{m_2=-\infty}^{\infty}q_1^{m_2}q_2^{2m_2-1}q^{3m_2^2-3m_2+1}\sum_{m_1=-\infty}^{\infty}q_1^{m_1}q^{m_1^2+m_1}\\
=&\sum_{m_2=-\infty}^{\infty}q_1^{m_2}q_2^{2m_2}q^{3m_2^2}\sum_{m_1=-\infty}^{\infty}q_1^{m_1}q^{m_1^2} \\*
&+q_2^{-1}q\sum_{m_2=-\infty}^{\infty}q_1^{m_2}q_2^{2m_2}q^{-3m_2}(q^3)^{m_2^2}\sum_{m_1=-\infty}^{\infty}q_1^{m_1}q^{m_1}q^{m_1^2}\\
=&\prod_{m=1}^{\infty}(1-(q^3)^{2m})(1+q_1q_2^2(q^3)^{2m-1})(1+(q_1q_2^2)^{-1}(q^3)^{2m-1})\cdot\\*
&\prod_{m=1}^{\infty}(1-q^{2m})(1+q_1q^{2m-1})(1+q_1^{-1}q^{2m-1})\\*
&+q_2^{-1}q\prod_{m=1}(1-(q^3)^{2m})(1+q_1q_2^2(q^3)^{2m-2})(1+(q_1q_2^2)^{-1}(q^3)^{2m})\cdot \\*
&\prod_{m=1}^{\infty}(1-q^{2m})(1+q_1q^{2m})(1+q_1^{-1}q^{2m-2})\;,
\end{align*}
where at the last equality we have used (\ref{eq:jactr2}) again.

Dividing this with $\prod_{m=1}^{\infty}(1-q^m)^{3}$ and using that
\[ q_1(1+q_0q_2q^{-1})=(1+q_1) \]
gives (\ref{eq:inf3}) after cancellations.


\section{Final comments}

The proof of Theorem \ref{thm:coldiaggen} in \cite{fujii2005combinatorial} is based on the decomposition of Young diagrams into cores and quotients as developed in \cite[Section 2.7]{james1981representation}. The \emph{$n$-core} of a diagonally colored Young diagram is the diagonally colored Young diagram obtained by successively removing border strips of length $n$, until this is no longer possible. Here a {\em border strip} is a skew Young diagram which does not contain $2 \times 2$ blocks and 
which contains exactly one $c$-labelled block for all labels $c \in C$. The removal of border strips from a diagonally colored Young diagram can be traced on another combinatorial object, the \emph{abacus}. The abacus for $n$ colors consists of \emph{rulers} corresponding to the residue classes in $C=\SZ/n\SZ$.  The $i$-th ruler consists of integers in the $i$-th residue class modulo $n$ in increasing order from top to bottom. Several \emph{beads} are placed on these rulers, at most one on each integer. 
In particular, to a Young diagram corresponding to the partition $\lambda=(\lambda_1,\dots,\lambda_k)$ place a bead in position $\lambda_i-i+1$ for all $i$, interpreting $\lambda_i$ as 0 for $i>k$. 
The removal of a border strip from the Young diagram corresponds to moving a bead up on one of the rulers. It turns out that shifting of beads on different rulers is independent from each other. In this way, the core of a partition corresponds to the bead configuration in which all the beads are shifted up as much as possible. Let us denote by $\mathcal{C}$ the set of $n$-core partitions. 
It can be shown that the configuration of the beads on a ruler is described by a partition. The collection of these is called the \emph{$n$-quotient}. Hence, we get a bijection
\begin{equation}\label{eq:corequot} {\mathcal P} \longleftrightarrow {\mathcal C} \times {\mathcal P}^{n}, \end{equation}
 compatible with the diagonal coloring.

Given an $n$-core, one can read the $n$ runners of its abacus representation 
separately. The lowest bead on the $i$-th ruler will have the position $j_i$, which is negative if the shift is 
toward the negative positions (upwards), and positive otherwise. These numbers are the same that appear in our proof in \eqref{eq:z0coeff}, and they have to satisfy 
$\sum_{i=0}^{n-1} j_i=0$.
The set $\{j_0,\dots,j_{n-1}\}$ completely determines the core Young diagram, so we get a bijection
\begin{equation*}\label{typeA-cores} 
{\mathcal C} \longleftrightarrow \left\{\sum_{i=0}^{n-1} j_i=0\right\}\subset\SZ^{n}.\end{equation*}

The decomposition \eqref{eq:corequot} reveals the structure of the
formula of Theorem \ref{thm:coldiaggen}; the first term is the generating series of $n$-tuples of 
(uncolored) partitions, whereas the second term is exactly a sum over 
$\underline{j}=(j_0,\dots,j_{n-1})\in {\mathcal C}$, i.e. the multi variable generating series of the $n$-core Young diagrams (see the correspondence \eqref{eq:mjcorr} below). 

All these, and our new proof for Theorem \ref{thm:coldiaggen} also imply that the colored F-partitions introduced in \ref{subsec:colfpart} above as one of our main tools 
may have a deeper connection to the core/quotient decomposition \eqref{eq:corequot}. This connection can be the subject of further investigations.

\subsection*{Acknowledgement:} The author thanks to Balázs Szendrői, András Némethi and to the anonymous referee for numerous comments on earlier versions of this manuscript. The author was partially supported by the \emph{Lend\"ulet program} of the Hungarian Academy of Sciences and by the ERC Advanced Grant LDTBud (awarded to Andr\'as Stipsicz).


\providecommand{\bysame}{\leavevmode\hbox to3em{\hrulefill}\thinspace}
\providecommand{\MR}{\relax\ifhmode\unskip\space\fi MR }
\providecommand{\MRhref}[2]{%
  \href{http://www.ams.org/mathscinet-getitem?mr=#1}{#2}
}

\providecommand{\href}[2]{#2}

\end{document}